\documentclass[12pt]{amsart}

\usepackage{amssymb, xcolor}%
\usepackage%[foot]
{amsaddr}
\usepackage{geometry}%
\geometry{a4paper}
\usepackage{xspace}

\usepackage{graphicx}

%%%%%%%%%%%% ENVIRONMENTS %%%%%%%%%%%%%%%

\begingroup%
\theoremstyle{plain}%\theorembodyfont{\slshape}%
\newtheorem{prop}{Proposition}[section]
\newtheorem{corollary}[prop]{Corollary}%
\newtheorem{lemma}[prop]{Lemma}%
\newtheorem{remark}[prop]{{Remark}}%
\newtheorem{defn}[prop]{{Definition}}%
\newtheorem{question*}{Question}%
\endgroup%

%%%%%%%
%\begingroup%
\theoremstyle{plain}
%\theorembodyfont{\itshape}\theoremheaderfont{\bfseries}%
\newtheorem{theorem}[prop]{Theorem}%
%\endgroup%

%
%\begingroup%
\theoremstyle{break}%\theorembodyfont{\slshape}%
\newtheorem{open?}[prop]{{Open Question}}
%\endgroup%

%%

%\begingroup%
\theoremstyle{break}%\theorembodyfont{\sffamily}
%
%\endgroup%

%
%\newenvironment{proof}{{\sl Proof.}}{\hspace*{\fill}$\square$}

\newcommand{\tdlc}{t.d.l.c.\@\xspace}

\def\NN{\mathbb N}
\def\QQ{\mathbb Q}

\def\ZZ{\mathbb Z}

\def\endproof{\hfill$\square$}

\def\ident{e}

\keywords{scale function, finite co-volume, lattice, uniscalar, anisotropic}
\subjclass[2010]{22D05 (Primary)  20E34, 22E40 (Secondary)}
\begin{document}

\title[The scale function and lattices]%
{The scale function and lattices}
\author{G. A. Willis}
\address{School of Mathematical and Physical Sciences,
              The University of Newcastle, University Drive, Building V,
              Callaghan, NSW 2308, Australia, \\
              Tel.: +61-2 4921 5546, %\\
              Fax: +61-2 4921 6898, %\\
}
\email{George.Willis@newcastle.edu.au}           %  \\
\thanks{Supported by ARC Discovery Project DP120100996}

\date{\today}
\maketitle

\begin{abstract}
It is shown that, given a lattice $H$ in a totally disconnected, locally compact group~$G$, the contraction subgroups in~$G$ and the values of the scale function on $G$ are determined by their restrictions to~$H$. Group theoretic properties intrinsic to the lattice, such as being periodic or infinitely divisible, are then seen to imply corresponding properties of~$G$.  
\end{abstract}

\section{Introduction}
\label{sec:intro}
%:
%:
Lattices in locally compact groups have been studied extensively. In the connected case this reduces to the study of lattices in Lie groups; strong results have been obtained, including the Margulis superrigidity theorem for lattices in higher rank semisimple Lie groups and some of its extensions, see~\cite{Margulis,Raghunathan,Shalom_rigidity}. 

Lattices in specific types of totally disconnected, locally compact (\tdlc) groups have been studied in~\cite{basslubotzky,BurgMozesLattices,l.rank1}, and that a uniform lattice which happens to be a free group controls the scale on an ambient \tdlc group was seen in \cite{Baumcocompact}. It has also  been shown recently that, in contrast with the connected case, compactly generated, simple \tdlc groups need not have lattices, \cite{nolattice,leboudecnolattice}. This note gives further information about how lattices control an ambient \tdlc group $G$ by showing that the contraction subgroups in~$G$ and the values of the scale function on $G$ are determined by their restrictions to the lattice. %As an immediate consequence, it is seen that if the lattice is periodic or infinitely divisible, then corresponding properties of the ambient group~$G$ are implied.  

We begin by recalling a few key facts about lattices and the scale function. 
\begin{defn}
\label{defn:finitecovolume}
The closed subgroup~$H$ of the locally compact group~$G$ has \emph{finite co-volume} if $G/H$ supports a finite $G$-invariant measure. If~$H$ has finite co-volume and is discrete with the subspace topology, it is a \emph{lattice} in~$G$.
\end{defn}
%%%%%
The discrete subgroup $H$ has finite co-volume if $G/H$ is compact, that is, if~$H$ is co-compact, see~\cite[Remark 1.11]{Raghunathan}. 

%%%%%%%
\begin{defn}
\label{defn:scaletidy}
The \emph{scale} on the \tdlc group~$G$ is the function $s_G: G \to \ZZ^+$ defined by
\begin{equation*}
\label{eq:scale}
s_G(x) := \min\left\{ [xVx^{-1} : xVx^{-1}\cap V] \; \colon V\leqslant G,\ V\text{ compact and open}\right\}.
\end{equation*}
A compact open subgroup $V\leq G$ is \emph{tidy for~$x$ in~$G$} if, setting $V_\pm = \bigcap_{n\geq0} x^{\pm n}Vx^{\mp n}$,
\begin{equation*}
\label{eq:tidy}
V = V_+V_- \mbox{ and } V_{++} := \bigcup_{n\geq0} x^nV_+x^{-n} \mbox{ is closed}.
\end{equation*}
\end{defn}
%%%%
Every \tdlc group has a base of identity neighbourhoods consisting of compact open subgroups, by~\cite{Dant}, \cite[Theorem~II.2.3]{MontZip} or~\cite[Theorem~II.7.7]{HewRoss}. The value $s_G(x)$ is then well-defined because $[xVx^{-1} : xVx^{-1}\cap V]$ is a positive integer. That subgroups tidy for~$x$ always exist is implied by the following.
%%%%%%%
\begin{theorem}[\cite{Willis94} Definition~p.343 and \cite{Willis01} Theorem~3.1]
\label{thm:tidy}
Let~$G$ be a \tdlc group and $x$ be in $G$. Then the compact open subgroup $V\leq G$ is tidy for~$x$ if and only if
$$
s_G(x) =  [xVx^{-1} : xVx^{-1}\cap V].
$$
\end{theorem}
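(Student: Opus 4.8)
Write $\alpha$ for conjugation by $x$, so $\alpha(g)=xgx^{-1}$, and abbreviate $I(V)=[\alpha(V):\alpha(V)\cap V]$; thus $s_G(x)=\min I(V)$ over compact open $V$. Since these indices are positive integers the minimum is attained, so the task is to identify the minimisers with the tidy subgroups. The plan is to isolate the two defining conditions, $V=V_+V_-$ and ``$V_{++}$ closed'', as independent obstructions to minimality and to dispose of each by a reduction step. I would first record the reformulation $I(V)=[V:V\cap x^{-1}Vx]$ (apply $\alpha^{-1}$, which preserves indices) and the basic inequality $I(V)\ge[\alpha(V_+):V_+]$. The latter holds because $\alpha(V_+)\subseteq\alpha(V)$ while $\alpha(V_+)\cap V=V_+$ (immediate from $V_+=\bigcap_{n\ge0}\alpha^n(V)$), which injects $\alpha(V_+)/V_+$ into $\alpha(V)/(\alpha(V)\cap V)$; this injection is onto, so that equality holds, whenever $V=V_+V_-$.

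The first reduction is \emph{tidying above}. For $V^{(n)}=\bigcap_{k=0}^n\alpha^k(V)$ one has $V^{(n+1)}=V^{(n)}\cap\alpha(V^{(n)})$ and $(V^{(n)})_+=V_+$, and the elementary estimate $[K:H\cap K]\le[A:H]$ for subgroups $H,K\le A$ gives $I(V^{(n+1)})\le I(V^{(n)})$. Being positive integers bounded below by $[\alpha(V_+):V_+]$, these indices are eventually constant; the stabilisation argument then identifies the limiting value as $[\alpha(V_+):V_+]$ and shows $V^{(n)}=(V^{(n)})_+(V^{(n)})_-$ for large $n$. Two conclusions follow: intersecting conjugates never raises $I$, and if $V$ minimises then $I(V^{(n)})=I(V)$ for every $n$, so the stable value is already attained at $n=0$ and the same analysis yields $V=V_+V_-$. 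Hence every minimiser is tidy above, and for tidy-above $V$ one has $I(V)=[\alpha(V_+):V_+]$.

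It remains to account for the closedness of $V_{++}=\bigcup_{n\ge0}\alpha^n(V_+)$ (\emph{tidying below}). The goal is that, for $V$ tidy above, $[\alpha(V_+):V_+]=s_G(x)$ precisely when $V_{++}$ is closed. The decisive direction is the forward construction: when $V_{++}$ fails to be closed, one uses its closure $\overline{V_{++}}$ to enlarge $V$ to a compact open $V'$, still tidy above, with $[\alpha(V'_+):V'_+]$ strictly smaller, so that a subgroup failing tidiness below cannot minimise. Conversely, once $V_{++}$ is closed no further decrease is possible, so all tidy subgroups realise one common value, which is then the minimum $s_G(x)$. Granting this, both implications follow at once: a minimiser is tidy above by the previous paragraph and tidy below here, while a tidy $V$ satisfies $I(V)=[\alpha(V_+):V_+]=s_G(x)$.

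I expect the strict-decrease construction to be the main obstacle: extracting from a non-closed $V_{++}$ an honestly smaller expansion factor forces one to understand how $\alpha$ dilates $V_+$ inside $\overline{V_{++}}$, to isolate the compact part on which $\alpha$ acts recurrently, and to verify both that the enlarged subgroup remains tidy above and that its positive-part index drops. The stabilisation in the tidying-above step is also delicate, but it is controlled purely by monotonicity of integer indices, whereas tidying below genuinely uses the topology of $G$, and that is where the substance of the theorem lies.
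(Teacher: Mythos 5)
This theorem is quoted by the paper from \cite{Willis94} and \cite{Willis01}; the paper itself contains no proof, so your attempt can only be measured against those sources. Your outline does follow their strategy faithfully: the inequality $I(V)\ge[\alpha(V_+):V_+]$, with equality when $V=V_+V_-$ because $\alpha(V_-)\subseteq V$ makes the coset injection onto; the tidying-above step via $V^{(n)}=\bigcap_{k=0}^n\alpha^k(V)$ and non-increasing integer indices; and the tidying-below step keyed to the closedness of $V_{++}$. That is exactly the skeleton of the cited proofs.

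As written, though, the proposal has genuine gaps precisely where the content of the theorem lies. First, the tidying-below step is announced as a goal rather than carried out: you never construct the enlarged subgroup $V'$ from a non-closed $V_{++}$. In \cite{Willis94} this passes through the auxiliary subgroup of elements whose $\alpha$-orbits eventually return to $V$ in both directions, and one must verify that the resulting $V'$ is compact and open, remains tidy above, and has strictly smaller positive-part index; this is the hardest part of the whole theory, and you acknowledge leaving it out. Second, the sentence ``once $V_{++}$ is closed no further decrease is possible, so all tidy subgroups realise one common value'' is an assertion, not an argument: that any two tidy subgroups yield the same index is a separate theorem of \cite{Willis94} (provable, e.g., by comparing both to a tidy subgroup obtained from their intersection, or via the formula $s_G(x)=\lim_n[\alpha^n(U):\alpha^n(U)\cap U]^{1/n}$ valid for every compact open $U$), and without it the direction ``tidy $\Rightarrow$ index equals $s_G(x)$'' does not follow even if the strict-decrease construction is granted. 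A smaller soft spot: for a minimiser you deduce $V=V_+V_-$ from stabilisation of $I(V^{(n)})$ already at $n=0$ ``by the same analysis,'' but the standard tidying-above lemma obtains $V^{(n)}=(V^{(n)})_+(V^{(n)})_-$ only for large $n$ by a compactness argument, so extracting tidiness above of $V$ itself from equality of indices requires the additional lemma of \cite{Willis01}, not a formal repetition of the stabilisation step.
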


The scale function on~$G$ is related to the modular function $\Delta : G\to (\QQ^+,\times)$ by the formula $\Delta(x) = s_G(x)/s_G(x^{-1})$. It is well known that the modular function is identically equal to~$1$ if~$G$ contains a lattice, see~\cite[Remark 1.9]{Raghunathan}, and the motivation for this work is to understand how properties of a lattice influence the scale function and related properties of~$G$. The relevant properties of~$G$ are as follows.

\begin{defn}
\label{defn:uniscalar}
Let~$G$ be a \tdlc group.
\begin{enumerate}
\item $G$ is \emph{uniscalar} if the scale function is identically equal to~$1$.
\item The \emph{contraction subgroup}  for $x\in G$ is
$$
{\sf con}(x) := \left\{ g\in G \mid x^ngx^{-n}\to \ident \text{ as }n\to\infty\right\}.
$$
\item $G$ is \emph{anisotropic} if ${\sf con}(x) = \{\ident\}$ for every~$x\in G$.
\end{enumerate}
\end{defn}
\noindent These features of~$G$ are related by the fact that triviality of ${\sf con}(x)$ implies that $s_G(x^{-1}) = 1$, see~\cite[Proposition 3.24]{contraction(aut(tdlcG))}. Hence anisotropic groups are uniscalar, the converse does not hold however. That $G$ should be anisotropic is equivalent to triviality of the \emph{Tits core}, see~\cite[Proposition~3.1]{{CapReidWill1}}.

The observation that ${\sf con}(x^n) = {\sf con}(x)$ for every $n\geq1$ will be useful later.
 
\section{Subgroups with finite co-volume}
\label{sec:fincov}

The proof of the main theorem relies on two results which may be found in~\cite{geometry(tdlcG)} and~\cite{CapReidWill1} respectively but are restated here. The proofs of these results involve an iterative argument that uses the factoring $V = V_+V_-$ of tidy subgroups and the easily verified containments $xV_+x^{-1}\geq V_+$ and $xV_-x^{-1} \leq V$. 

\begin{lemma}[\cite{geometry(tdlcG)}, Lemma~2.4]
\label{lem:scale on double cosets}
Suppose that $V$ is a compact, open subgroup of $G$ that is tidy for $x\in G$ and let $n\geq1$. 
Then $(VxV)^n= V_-x^n V_+$  
and 
$s_G(y)=s_G(x)^n$ for every $y\in (VxV)^n$.
\endproof
\end{lemma}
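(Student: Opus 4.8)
The plan is to prove the two assertions separately, establishing the identity $(VxV)^n=V_-x^nV_+$ first and deducing the statement about the scale from it. Throughout I would work with the elementary commutation relations that the remark preceding the lemma advertises: since $V_\pm$ are subgroups and $V=V_+V_-$ is tidy, taking inverses gives $V=V_-V_+$ as well, while the definitions of $V_\pm$ yield $V_+x\subseteq xV_+$ and $xV_-\subseteq V_-x$ (equivalently $x^{-1}V_+x\subseteq V_+$ and $xV_-x^{-1}\subseteq V_-$), together with their iterates $V_+x^{m}\subseteq x^{m}V_+$ and $x^{m}V_-\subseteq V_-x^{m}$.

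For the base case $n=1$ I would write $VxV=(V_-V_+)\,x\,(V_-V_+)$ and collapse the inner block: from $V_+x\subseteq xV_+$ and then $xV_-\subseteq V_-x$ one gets $V_+xV_-\subseteq xV_+V_-=xV=xV_-V_+\subseteq V_-xV_+$, whence $VxV=V_-(V_+xV_-)V_+\subseteq V_-xV_+$; the reverse inclusion is immediate. The inductive step runs the same way: assuming $(VxV)^{n-1}=V_-x^{n-1}V_+$, I would multiply by $VxV=V_-xV_+$, use $V_+V_-=V$ and the case $n=1$ to bound the central $VxV_+$ by $V_-xV_+$, and finally push the spare $V_-$ to the left through $x^{n-1}$ via $x^{n-1}V_-\subseteq V_-x^{n-1}$ to reach $V_-x^nV_+$; again $\supseteq$ is seen by inserting the identity into the two central factors. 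The same manipulations (now using $V_+x^nV_+=x^nV_+$ and $x^nV_-\subseteq V_-x^n$) also show $Vx^nV=V_-x^nV_+$, so that $(VxV)^n$ is in fact the single double coset $Vx^nV$; this reformulation is what I would carry into the second part.

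For the scale, fix the compact open subgroup $V$ and write $I_V(g):=[\,gVg^{-1}:gVg^{-1}\cap V\,]$. Given $y\in(VxV)^n$, for every $k\geq1$ the first part gives $y^k\in(VxV)^{nk}=Vx^{nk}V$, so $y^k=a_k x^{nk} b_k$ with $a_k,b_k\in V$. Since $b_kVb_k^{-1}=V$ we have $y^kVy^{-k}=a_k\,x^{nk}Vx^{-nk}\,a_k^{-1}$, and intersecting with $V=a_kVa_k^{-1}$ and using that conjugation by $a_k$ preserves indices yields $I_V(y^k)=I_V(x^{nk})$ for all $k$. I would then invoke the standard description of the scale as the limit $s_G(g)=\lim_k I_V(g^k)^{1/k}$, valid for any compact open $V$, to conclude
\[
s_G(y)=\lim_{k}I_V(y^k)^{1/k}=\lim_{k}I_V(x^{nk})^{1/k}=\Big(\lim_{k}I_V(x^{nk})^{1/(nk)}\Big)^{n}=s_G(x)^n .
\]

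The coset algebra of the first two paragraphs is routine once the four commutation relations are in hand. The main obstacle is the lower bound $s_G(y)\geq s_G(x)^n$: one cannot simply exhibit a subgroup realising this value, because $V$ need not be tidy for $y$ — conjugating $V_+$ by the $V_-$-component of $y$ destroys the factorisation $V=V_+V_-$ for $y$ — so the equality of indices $I_V(y^k)=I_V(x^{nk})$ must be promoted to an equality of scales through the asymptotic characterisation rather than through \thmref{thm:tidy} directly. An alternative ending would be to observe that $I_V(y^k)=I_V(y)^k$ for all $k$ and cite the criterion that this identity forces $V$ to be tidy for $y$, after which \thmref{thm:tidy} gives $s_G(y)=I_V(y)=s_G(x)^n$.
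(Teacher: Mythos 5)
Your proposal is correct. Bear in mind that the paper itself gives no proof of this lemma: it is imported from the cited reference, with only the remark that the proof is ``an iterative argument that uses the factoring $V=V_+V_-$ of tidy subgroups'' and the easily verified containments $xV_+x^{-1}\geq V_+$ and $xV_-x^{-1}\leq V_-$. Your first two paragraphs carry out exactly that iterative argument, and they are correct: the commutation relations $V_+x\subseteq xV_+$ and $xV_-\subseteq V_-x$ are the right ones, the induction goes through, and the identification $(VxV)^n=Vx^nV$ is a genuine byproduct. For the scale assertion you diverge from the indicated route: writing $I_V(g)=[\,gVg^{-1}:gVg^{-1}\cap V\,]$, your conjugation argument correctly gives $I_V(y^k)=I_V(x^{nk})$, which equals $s_G(x)^{nk}$ since $V$ is tidy for $x$ (hence for its powers), and you finish with M\"oller's limit formula $s_G(g)=\lim_k I_V(g^k)^{1/k}$, valid for every compact open $V$. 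That is a complete and correct ending; its one heavy external ingredient is M\"oller's theorem, which is standard in this area though nowhere quoted in the paper. Your ``alternative ending'' --- the criterion that $I_V(y^k)=I_V(y)^k$ for all $k$ forces $V$ to be tidy for $y$, after which Theorem~\ref{thm:tidy} applies --- is equally valid, rests on the same body of work, and is closer in spirit to how the cited source concludes; the limit-formula route buys independence from any tidiness claim about $y$, while the criterion route buys the extra fact that $V$ is tidy for every element of $(VxV)^n$. On that point, one correction to your commentary: the aside that ``$V$ need not be tidy for $y$'' is false --- by the very criterion you mention, $V$ \emph{is} tidy for each $y\in(VxV)^n$ --- but since your main argument never uses this, it is a slip in the discussion rather than a gap in the proof.
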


The following result is established in~\cite{CapReidWill1} for all $y$ in $xV$. The proof that it holds for all $y$~in $VxV$, as claimed here, is the same except that at one point a certain product that is observed to belong to $V$ has an extra factor of $v_1$, where $y = v_1xv_2$.  
%%%%%%%%
\begin{lemma}[{\it c.f.\/} \cite{CapReidWill1}, Lemma~4.1]
\label{lem:conjugcon}
Suppose that $V$ is a compact, open subgroup of $G$ that is tidy for~$x$. Then for every $y\in VxV$ there is $t\in V_+\cap{\sf con}(x^{-1})$ such that $t^{-1}y^ktx^{-k} \in V \mbox{ for every }k\geq0$.
\endproof
\end{lemma}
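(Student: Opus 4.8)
The plan is to follow the powers of $y$ through the factorisation supplied by Lemma~\ref{lem:scale on double cosets} and to manufacture $t$ as a limit of their $V_+$-parts. Write $y=v_1xv_2$ with $v_1,v_2\in V$. By Lemma~\ref{lem:scale on double cosets}, for each $k\ge0$ one has $y^k=a_kx^kb_k$ with $a_k\in V_-$ and $b_k\in V_+$ (and $a_0=b_0=\ident$). For any $t\in V_+$,
\[
t^{-1}y^ktx^{-k}=(t^{-1}a_k)\,\bigl(x^kb_ktx^{-k}\bigr),
\]
and since $t,a_k\in V$ the first factor lies in $V$; hence $t^{-1}y^ktx^{-k}\in V$ holds if and only if $x^kb_ktx^{-k}\in V$. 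Using the standard tidy containment $x^{-1}V_+x\subseteq V_+$ we get $x^{-k}V_+x^k\subseteq V_+\subseteq V$, so it will be enough to produce a single $t\in V_+$ with
\[
b_kt\in x^{-k}V_+x^k\qquad\text{for all }k\ge0,
\]
and then to arrange that $t$ can be taken in ${\sf con}(x^{-1})$.

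The first task is to control the sequence $(b_k)$. Expanding $y^{k+1}=y^ky$ and repeatedly re-factoring products by means of $V=V_-V_+=V_+V_-$ together with the containments $xV_-x^{-1}\subseteq V_-$ and $x^{-1}V_+x\subseteq V_+$, I would extract a recursion of the form $b_{k+1}=\gamma_kb_k$ in which the correction $\gamma_k$ is the $V_+$-component of a product into which $b_k$ enters only after conjugation by $x^{-1}$. The effect of that conjugation is to drive $\gamma_k$ into the shrinking subgroups $x^{-k}V_+x^k$; granting this, the partial products $\gamma_k\cdots\gamma_1$ are Cauchy in the compact group $V_+$, so $b_k\to b_\infty$ for some $b_\infty\in V_+$, and moreover $b_kb_\infty^{-1}\in x^{-k}V_+x^k$ for every $k$. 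Thus the displayed condition $b_kt\in x^{-k}V_+x^k$ holds with $t_0:=b_\infty^{-1}$.

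It remains to push $t_0$ into the contraction subgroup without disturbing that condition. Here I would appeal to the structure theory of tidy subgroups, where the closedness of $V_{++}$ that is part of tidiness yields a factorisation $V_+=(V_+\cap{\sf con}(x^{-1}))\cdot V_+^{\infty}$, with $V_+^{\infty}=\bigcap_{n\ge0}x^{-n}V_+x^n$ the part of $V_+$ on which conjugation by $x^{-1}$ fails to contract. Write $t_0=tn$ accordingly, with $t\in V_+\cap{\sf con}(x^{-1})$ and $n\in V_+^{\infty}$. Since $x^knx^{-k}\in V_+\subseteq V$ for all $k\ge0$, we have
\[
x^kb_ktx^{-k}=\bigl(x^kb_kt_0x^{-k}\bigr)\bigl(x^kn^{-1}x^{-k}\bigr)\in V,
\]
so the condition survives for $t$, and this $t\in V_+\cap{\sf con}(x^{-1})$ is the element we want. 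Finally, the reduction to the case $y\in xV$ handled in \cite{CapReidWill1} is precisely the remark preceding the statement: carrying the extra left factor $v_1\in V$ through the recursion changes, by a factor in $V$, only a product that is in any case verified to lie in $V$.

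The heart of the argument, and the step I expect to be hardest, is the convergence in the second paragraph: showing that the corrections $\gamma_k$ really do land in the contracting subgroups $x^{-k}V_+x^k$, so that $(b_k)$ converges and the key condition is met. This is where the dynamics of $x^{-1}$ on $V_+$ must be converted into an honest limit, and it relies essentially on the factorisation $V=V_+V_-$ and the closedness of $V_{++}$; the subsequent separation of the non-contracting part of $b_\infty$ in the third paragraph is the secondary technical point.
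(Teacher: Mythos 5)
Your overall architecture is the right one --- it is the ``iterative argument using the factoring $V=V_+V_-$ and the containments $xV_-x^{-1}\leq V_-$, $x^{-1}V_+x\leq V_+$'' that the paper alludes to before the lemma --- and the two reductions at either end are sound. Writing $y^k=a_kx^kb_k$ with $a_k\in V_-$, $b_k\in V_+$ and reducing to $x^kb_ktx^{-k}\in V$ is correct, and (using the tidiness identity $V\cap x^kV_+x^{-k}=V_+$) your sufficient condition $b_kt\in x^{-k}V_+x^k$ is in fact \emph{equivalent} to the conclusion, so you have not over-strengthened the target. The final step, splitting $t_0=tn$ with $t\in V_+\cap{\sf con}(x^{-1})$ and $n\in\bigcap_{k\geq0}x^{-k}V_+x^k$, is also fine modulo a citation: the factorisation $V_+=(V_+\cap{\sf con}(x^{-1}))(V_+\cap V_-)$ for tidy $V$ is a genuine result of Baumgartner--Willis and is what makes the ``$t$ lies in the contraction group, not merely its closure'' part of the statement work.

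The genuine gap is the step you yourself flag with ``granting this'': the claim that the factorisations can be chosen so that $b_{k+1}b_k^{-1}\in x^{-k}V_+x^k$. Since (as noted above) this is equivalent to the nonempty intersection of the cosets $b_k^{-1}x^{-k}V_+x^k$, it \emph{is} the lemma; everything else in your argument is bookkeeping around it. Moreover the heuristic you offer for it does not survive being written out. Carrying out the natural re-factorisation of $y^{k+1}=y^k\cdot v_1xv_2$ one finds: write $b_kv_1=cd$ with $c\in V_-$, $d\in V_+$, push $c$ left past $x^k$ and $d$ right past $x$, then factor $(x^{-1}dx)v_2=c'd'$ and push $c'$ left; this yields $b_{k+1}=d'$. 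The new $V_+$-component is thus the $V_+$-part of $(x^{-1}dx)v_2$, not a product $\gamma_kb_k$ with $\gamma_k$ small: a single conjugation by $x^{-1}$ only lands you in $x^{-1}V_+x$, and the fixed factors $v_1,v_2$ re-enter the computation at every stage, so there is no automatic compounding that drives the increment into $x^{-k}V_+x^k$. (Already for $k=1$ one gets $b_2b_1^{-1}=q^{-1}(x^{-1}b_1x)\,(\text{terms involving }v_2)$ with $V_-$-factors interleaved, and no visible reason for membership in $x^{-1}V_+x$.) Establishing the convergence therefore requires the more careful induction of \cite{CapReidWill1}, Lemma~4.1 --- which is exactly what the paper does: it gives no independent proof, but cites that lemma and observes that the only change needed to pass from $y\in xV$ to $y\in VxV$ is an extra factor of $v_1$ in one product that is checked to lie in~$V$. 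As it stands, your write-up assumes the lemma's essential content at its central step.
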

%%%%%%
\begin{remark}
It may happen in Lemma~\ref{lem:conjugcon} that ${\sf con}(x^{-1})$ is trivial, in which case necessarily~$t=\ident$. However any subgroup tidy above for~$x$ then satisfies $xVx^{-1}\leq V$ and it follows that $y^kx^{-k}\in V$. 
\end{remark}
%%%%%%%

Lemma~\ref{lem:conjugcon} implies the following, by the same argument as given in~\cite{CapReidWill1}. 
%%%%%%%
\begin{prop}[{\it c.f.\/} \cite{CapReidWill1}, Corollary~4.2]
\label{prop:conjugcon}
Suppose that $V$ is a compact, open subgroup of $G$ that is tidy for~$x$. Then for every $y\in VxV$ there is $t\in V_+\cap{\sf con}(x^{-1})$ such that $t{\sf con}(x)t^{-1} = {\sf con}(y)$.  
\endproof
\end{prop}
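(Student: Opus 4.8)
The plan is to deduce Proposition~\ref{prop:conjugcon} from Lemma~\ref{lem:conjugcon} by showing that the element $t\in V_+\cap{\sf con}(x^{-1})$ produced there actually conjugates ${\sf con}(x)$ onto ${\sf con}(y)$. The starting point is the conclusion of the lemma, namely that $t^{-1}y^ktx^{-k}\in V$ for every $k\geq0$; rearranging this gives $y^k = t\,(t^{-1}y^ktx^{-k})\,x^k\,t^{-1}$, so that conjugation by $y$ is built from conjugation by $x$ composed with multiplication by the bounded error term $w_k := t^{-1}y^ktx^{-k}\in V$. The guiding idea is that since $V$ is compact, this error term stays in a compact set and therefore cannot disturb the convergence $y^n g y^{-n}\to\ident$; what governs the limit is the $x$-conjugation.

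First I would show the containment $t\,{\sf con}(x)\,t^{-1}\subseteq{\sf con}(y)$. Take $g\in{\sf con}(x)$, so $x^ngx^{-n}\to\ident$, and set $h = tgt^{-1}$. The aim is to prove $y^nhy^{-n}\to\ident$. Using $y^n = t w_n x^n t^{-1}$ from the rearrangement above, I would compute $y^nhy^{-n} = t\,w_n\,(x^ngx^{-n})\,w_n^{-1}\,t^{-1}$. Now $x^ngx^{-n}\to\ident$ by hypothesis, and $w_n$ ranges over the compact set $V$; conjugating a null sequence by elements of a fixed compact set still produces a null sequence, since for any identity neighbourhood $U$ one can find a smaller $U'$ with $VU'V^{-1}\subseteq U$ by compactness. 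Hence $w_n(x^ngx^{-n})w_n^{-1}\to\ident$, and applying the fixed conjugation by $t$ preserves the limit, giving $y^nhy^{-n}\to\ident$, i.e. $h\in{\sf con}(y)$.

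For the reverse inclusion I would exploit the symmetry of the situation. Observe that $x\in VxV$ trivially and, more to the point, that if $V$ is tidy for $x$ then applying the same reasoning with the roles of $x$ and $y$ interchanged requires a subgroup tidy for $y$. The cleaner route is to note that $y\in VxV$ forces $s_G(y)=s_G(x)$ by Lemma~\ref{lem:scale on double cosets} (the case $n=1$), and that $V$ is in fact tidy for $y$ as well, so that $x\in VyV$ and Lemma~\ref{lem:conjugcon} applies with $x$ and $y$ swapped to yield some $t'\in V_+\cap{\sf con}(y^{-1})$ with $t'\,{\sf con}(y)\,t'^{-1}\subseteq{\sf con}(x)$. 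Combining the two inclusions shows $t{\sf con}(x)t^{-1}$ and ${\sf con}(y)$ are each contained in a conjugate of the other; a short argument with the conjugating elements, or a dimension-free cardinality-of-index comparison, then upgrades the inclusions to the desired equality $t\,{\sf con}(x)\,t^{-1}={\sf con}(y)$.

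The main obstacle I anticipate is the reverse inclusion and the final upgrade to equality. Verifying that $V$ is tidy for every $y\in VxV$ (so that the lemma can be applied symmetrically) needs care, and even granting both inclusions one must check that the two conjugating elements $t$ and $t'$ interact correctly rather than merely sandwiching the groups between unrelated conjugates. The smoothest resolution is probably to avoid the symmetric argument altogether and instead show directly that the map $h\mapsto t^{-1}ht$ sends ${\sf con}(y)$ into ${\sf con}(x)$, by running the computation of the first inclusion in reverse using $x^n = t^{-1}w_n^{-1}y^n t$; since $t\in V_+$ and $t^{-1}\in{\sf con}(x)$-related bounded sets, establishing that this reverse conjugation also respects the compact error term is the delicate point.
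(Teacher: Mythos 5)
Your overall strategy is the right one and is essentially the argument the paper defers to in \cite{CapReidWill1}: take the $t$ supplied by Lemma~\ref{lem:conjugcon}, set $w_k := t^{-1}y^ktx^{-k}\in V$, and use compactness of $V$ to see that the bounded error $w_k$ cannot disturb convergence to the identity. Your forward inclusion $t\,{\sf con}(x)\,t^{-1}\subseteq{\sf con}(y)$ is correct as written.

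The reverse inclusion is where you hedge, and the first of your two proposed routes genuinely fails: from $t\,{\sf con}(x)\,t^{-1}\subseteq{\sf con}(y)$ and $t'\,{\sf con}(y)\,t'^{-1}\subseteq{\sf con}(x)$ you only learn that each group is contained in a conjugate of the other, and there is no general principle upgrading this to equality (these subgroups are typically infinite and non-compact, so no index or cardinality comparison is available). Drop that route. Your fallback is not merely the ``smoothest'' option --- it is the correct one, and it is not delicate: it is the exact mirror of the forward computation. The only repair needed is in the rearranged formula: from $t^{-1}y^nt=w_nx^n$ one gets $x^n=w_n^{-1}\,t^{-1}y^nt$ (your $x^n=t^{-1}w_n^{-1}y^nt$ has the two factors transposed). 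Then for $h\in{\sf con}(y)$ and $g=t^{-1}ht$,
$$
x^ngx^{-n} \;=\; w_n^{-1}\bigl(t^{-1}(y^nhy^{-n})t\bigr)w_n ,
$$
where $y^nhy^{-n}\to\ident$, conjugation by the fixed element $t$ preserves the limit, and $w_n^{-1}\in V$, so the same compactness argument you used in the forward direction applies. Hence $t^{-1}{\sf con}(y)t\subseteq{\sf con}(x)$, i.e.\ ${\sf con}(y)\subseteq t\,{\sf con}(x)\,t^{-1}$, and equality follows. (Incidentally, your worry about whether $V$ is tidy for every $y\in VxV$ can be settled affirmatively --- writing $y=v_1xv_2$ gives $[yVy^{-1}:yVy^{-1}\cap V]=[xVx^{-1}:xVx^{-1}\cap V]=s_G(x)=s_G(y)$ by Lemma~\ref{lem:scale on double cosets} and Theorem~\ref{thm:tidy} --- but with the direct reverse computation you do not need this fact at all.)
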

%%%%%%%%

We are now ready to prove the main result of this note. 
\begin{theorem}
\label{thm:fincov}
Let $G$ be a totally disconnected, locally compact group and $H$ be a closed subgroup of~$G$ having finite co-volume. Then, for every $x\in G$ there are $n\geq 1$ and $h\in H$ such that 
$$
s_G(x)^n = s_G(h).
$$
Moreover, ${\sf con}(h)$ is conjugate to ${\sf con}(x)$.
\end{theorem}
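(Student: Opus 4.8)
The plan is to fix $x\in G$, choose a compact open subgroup $V$ tidy for~$x$ (which exists by \thmref{thm:tidy} and the tidying procedure), and then use the finite co-volume hypothesis to manufacture an element of~$H$ lying in a double coset of the form $Vx^nV$; \lemref{lem:scale on double cosets} and \proref{prop:conjugcon} will then supply both conclusions.

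First I would carry out a pigeonhole (Poincar\'e recurrence) step. Let $\nu$ be a finite $G$-invariant measure on $G/H$ provided by Definition~\ref{defn:finitecovolume} and let $\pi\colon G\to G/H$ be the quotient map. The image $A:=\pi(V)$ is open and nonempty, and $\nu(A)>0$: by inner regularity there is a compact $K\subseteq G/H$ with $\nu(K)>0$, and since $K$ is covered by finitely many $G$-translates of~$A$, invariance of~$\nu$ forces $\nu(A)>0$. Left translation by~$x$ preserves~$\nu$, so the sets $x^{-k}A$ for $k\geq0$ all have measure $\nu(A)$; as $\nu(G/H)<\infty$ they cannot be pairwise disjoint, and translating by a suitable power of~$x$ one obtains $n\geq1$ with $A\cap x^{-n}A\neq\emptyset$. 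Unwinding this intersection yields $g_1,g_2\in V$ with $g_2^{-1}x^ng_1\in H$, so that $h:=g_2^{-1}x^ng_1$ lies in $H\cap Vx^nV$.

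Next I would identify this double coset with a power of $VxV$. Because $\ident\in V$, inserting copies of~$V$ between the factors of~$x^n$ gives $Vx^nV\subseteq(VxV)^n$, while the reverse inclusion is automatic; hence $Vx^nV=(VxV)^n=V_-x^nV_+$ by \lemref{lem:scale on double cosets}, and that lemma simultaneously yields $s_G(h)=s_G(x)^n$, the first assertion. For the contraction subgroup I would pass to the element~$x^n$: since $V$ is tidy for~$x$ it is tidy for~$x^n$ (a standard consequence of the tidiness theory of~\cite{Willis01}), so \proref{prop:conjugcon} applies to~$x^n$ and to $h\in Vx^nV$, producing $t\in V_+\cap{\sf con}(x^{-n})$ with $t\,{\sf con}(x^n)\,t^{-1}={\sf con}(h)$. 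Invoking the observation ${\sf con}(x^n)={\sf con}(x)$ recorded in \secref{sec:intro}, this reads ${\sf con}(h)=t\,{\sf con}(x)\,t^{-1}$, so ${\sf con}(h)$ is conjugate to ${\sf con}(x)$.

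I expect the main obstacle to be the interface between the measure-theoretic step and the combinatorics of tidy subgroups: the recurrence argument places $h$ only in the full double coset $Vx^nV$, whereas \lemref{lem:scale on double cosets} and \proref{prop:conjugcon} control the a priori smaller set $V_-x^nV_+$. The crux is therefore the identity $Vx^nV=(VxV)^n$ for the scale, together with the passage to~$x^n$ for the contraction subgroup. A secondary point requiring care is the strict positivity $\nu(\pi(V))>0$, which must be argued as above since $G/H$ need not be $\sigma$-compact.
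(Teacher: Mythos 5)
Your proposal is correct and takes essentially the same route as the paper: a pigeonhole argument on $G/H$ with the finite invariant measure produces $h\in H\cap Vx^nV$, and then Lemma~\ref{lem:scale on double cosets} and Proposition~\ref{prop:conjugcon}, together with ${\sf con}(x^n)={\sf con}(x)$, give both conclusions. The extra details you supply --- positivity of $\nu(\pi(V))$ via inner regularity, the inclusion $Vx^nV\subseteq(VxV)^n$, and tidiness of $V$ for $x^n$ --- simply make explicit steps that the paper leaves implicit.
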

\begin{proof}
Let~$V$ be a compact, open subgroup of~$G$ that is tidy for~$x$ and let~$\mu$ be a finite $G$-invariant measure on $G/H$. Then $\pi_H(V)$ is an open subset of $G/H$ and hence $\mu(\pi_H(V))>0$. For each $n\geq1$ we have that 
$$
\mu(\pi_H(x^nV)) = \mu(x^n.\pi_H(V)) = \mu(\pi_H(V))
$$ 
because $\mu$ is $G$-invariant. Since~$\mu$ is finite, finite additivity of~$\mu$ implies that there is $n>0$ such that
$$
\pi_H(x^nV)\cap \pi_H(V) \ne\emptyset.
$$
The definition of $\pi_H$ then implies that there is $h\in H$ with $x^nV \cap Vh\ne\emptyset$. Therefore~$h$ belongs to $Vx^nV$. Hence Lemma~\ref{lem:scale on double cosets} implies that $s_G(x)^n = s_G(h)$, and Proposition~\ref{prop:conjugcon} implies that ${\sf con}(h)$ is conjugate to ${\sf con}(x^n)$ which, as remarked in the introduction, is equal to ${\sf con}(x)$.
\end{proof}
%%%%%%%

It is not true in general that~$s_G(H) = s_G(G)$. For example, the group $G = \QQ_p \times_p \ZZ$ has the co-compact subgroup $H = \QQ_p \times_p n\ZZ$ and 
$$
s_G(G) = \left\{ p^k \mid k\geq0\right\}\mbox{ whereas }s_G(H) = \left\{ p^{nk} \mid k\geq0\right\}.
$$

Several conclusions may be drawn immediately from the theorem.
\begin{corollary}
\label{cor:fincov}
Let $G$ be a \tdlc group and~$H$ be a closed subgroup having finite co-volume. 
\begin{enumerate}
\item If $s_G(h) = 1$ for every $h\in H$, then $G$ is uniscalar. 
\item If ${\sf con}(h)$ is closed for every $h\in H$, then ${\sf con}(x)$ is closed for every $\in G$. 
\item If every element of $H$ has trivial contraction group, then $G$ is anisotropic.
\end{enumerate}
\end{corollary}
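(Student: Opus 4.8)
The plan is to derive all three parts of the corollary as direct consequences of Theorem~\ref{thm:fincov}, which we may assume. The essential point is that the theorem transfers information from $H$ to an arbitrary element $x \in G$: for each such $x$ it produces an integer $n \geq 1$ and an element $h \in H$ with $s_G(x)^n = s_G(h)$ and with ${\sf con}(h)$ conjugate to ${\sf con}(x)$. Each hypothesis in the corollary is a property of all elements of $H$, and our task is to see that the conclusion about $G$ follows once the relevant quantity is pulled back through these two relations.

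For part~(1), fix $x \in G$ and apply the theorem to obtain $n \geq 1$ and $h \in H$ with $s_G(x)^n = s_G(h)$. By hypothesis $s_G(h) = 1$, so $s_G(x)^n = 1$; since $s_G(x)$ is a positive integer and $n \geq 1$, this forces $s_G(x) = 1$. As $x$ was arbitrary, $G$ is uniscalar. For part~(3), fix $x \in G$ and again apply the theorem to obtain $h \in H$ with ${\sf con}(h)$ conjugate to ${\sf con}(x)$. By hypothesis ${\sf con}(h) = \{\ident\}$; since conjugation is an automorphism of $G$, the conjugate of the trivial subgroup is trivial, so ${\sf con}(x) = \{\ident\}$. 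Hence $G$ is anisotropic.

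Part~(2) runs along the same lines but uses the topological content of the conjugacy. Fix $x \in G$ and choose $h \in H$ with ${\sf con}(h)$ conjugate to ${\sf con}(x)$, say ${\sf con}(x) = g\,{\sf con}(h)\,g^{-1}$ for some $g \in G$. By hypothesis ${\sf con}(h)$ is closed in $G$, and conjugation by $g$ is a homeomorphism of $G$; the image of a closed set under a homeomorphism is closed, so ${\sf con}(x)$ is closed. Since $x$ was arbitrary, every contraction subgroup of $G$ is closed.

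The argument is in each case a one-line deduction, so there is no genuine obstacle beyond bookkeeping. The only subtlety worth flagging is the quiet use in parts~(2) and~(3) that conjugation $g \mapsto aga^{-1}$ is both a continuous group automorphism and a homeomorphism of the ambient \tdlc group, which guarantees that triviality and closedness of contraction subgroups are conjugation-invariant; I would note this explicitly rather than leave it implicit, since it is exactly the structural fact that lets the ``conjugate to'' clause of Theorem~\ref{thm:fincov} do its work.
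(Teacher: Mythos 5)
Your proof is correct and is exactly the argument the paper intends: the paper states the corollary as following ``immediately from the theorem,'' and your deductions (positivity of the scale forcing $s_G(x)=1$ in part~(1), and conjugation being a homeomorphism preserving triviality and closedness in parts~(2) and~(3)) are precisely the omitted bookkeeping. Nothing is missing.
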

%%%%%%%%

Here are two situations in which Corollary~\ref{cor:fincov} applies to extend properties of a lattice to properties of its ambient group.
%%%%%
\begin{prop}
\label{prop:periodic lattice} 
Suppose that the \tdlc group~$G$ has a lattice $H$ with $\langle h\rangle$ finite for every~$h\in H$. Then~$G$ is anisotropic and $\langle x\rangle$ is pre-compact for every~$x\in G$.  
\end{prop}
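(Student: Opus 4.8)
The plan is to obtain both conclusions from Corollary~\ref{cor:fincov} and the co-volume machinery of the previous section, the only extra input being that each $h\in H$ has finite order. First I would prove anisotropy. By Corollary~\ref{cor:fincov}(3) it is enough to show ${\sf con}(h)=\{\ident\}$ for every $h\in H$. Fix such an $h$; as $\langle h\rangle$ is finite, $h$ has finite order $m$, so $h^n$ depends only on $n\bmod m$ and the sequence $\bigl(h^ngh^{-n}\bigr)_{n\geq0}$ is periodic for every $g$. In a Hausdorff group a periodic sequence converges only if it is constant, with constant value equal to its limit; hence $g\in{\sf con}(h)$ forces $h^ngh^{-n}=\ident$ for all $n$, and taking $n=0$ gives $g=\ident$. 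Thus every element of $H$ has trivial contraction group and $G$ is anisotropic.

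For the second assertion I would first invoke the remark in \secref{sec:intro} that anisotropic groups are uniscalar, so $s_G(x)=s_G(x^{-1})=1$ for every $x\in G$. Fixing $x$ and a subgroup $V$ tidy for $x$, \thmref{thm:tidy} gives $xVx^{-1}\subseteq V$; since $V$ is also tidy for $x^{-1}$ the same reasoning yields the reverse inclusion, so $xVx^{-1}=V$ and therefore $Vx^nV=x^nV$ for all $n$. I would then rerun the co-volume argument from the proof of \thmref{thm:fincov}: the sets $\pi_H(x^nV)$ all have the same positive measure while $\mu$ is finite, so two of them meet and produce $n\geq1$ together with an element $h\in H\cap Vx^nV=H\cap x^nV$. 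Writing $h=x^nv$ with $v\in V$, both factors normalize $V$, and hence so does $h$.

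It remains to turn the finite order of $h$ into precompactness of $\langle x\rangle$. In the group $L=\langle x\rangle V$ the subgroup $V$ is normal, the quotient $L/V$ is cyclic and generated by the image of $x$, and the image of $h=x^nv$ is the coset $x^nV$. If $h^k=\ident$, then $x^{nk}V=V$ in $L/V$, that is $x^{nk}\in V$; consequently $\langle x\rangle$ lies in the finitely many cosets $Vx^r$ with $0\leq r<nk$ of the compact set $V$, and $\overline{\langle x\rangle}$ is compact. The hard part is exactly this transfer: anisotropy by itself does not make cyclic subgroups precompact (the discrete group $\ZZ$ is anisotropic but not compact), so the torsion hypothesis must be used to trap a power of $x$ inside the coset of a finite-order element of $H$. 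The decisive observation, and the step I expect to need the most care, is that the element $h$ so produced normalizes the tidy subgroup $V$, for this is precisely what converts finite order in $G$ into finite order in $L/V$.
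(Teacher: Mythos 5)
Your proposal is correct and follows essentially the same route as the paper: anisotropy of $H$ from torsion plus Corollary~\ref{cor:fincov}(3), then normalisation of a tidy subgroup $V$ by $x$ (via uniscalarity), the co-volume argument from Theorem~\ref{thm:fincov} to find $h\in H\cap x^nV$, and finally the finite order of $h$ trapping $x^{nk}$ in $V$. The paper phrases the last step as compactness of $\langle h,V\rangle\ni x^n$, but that is the same observation you make with $L/V$.
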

\begin{proof} 
Every element of~$G$ with finite order has trivial contraction group. Hence~$H$ is anisotropic and it follows by Corollary~\ref{cor:fincov} that $G$ is anisotropic.

Let $x$ be in~$G$ and let $V\leq G$ be tidy for~$x$. Then~$V$ is normalised by~$x$ and, by the argument in the proof of Theorem~\ref{thm:fincov}, there is $n\geq 1$ such that $x^n V \cap H \ne \emptyset$. Choose $h\in x^n V \cap H$. Then $x^n \in \langle h, V\rangle$, which is compact because $h$ has finite order and normalises~$V$. Therefore $\langle x\rangle$ has compact closure.
\end{proof}

Proposition~\ref{prop:periodic lattice} asserts that every element of~$G$ normalises a compact open subgroup. It does not claim that~$G$ has a compact open normal subgroup and that is not true in general, for the group
$$
G = \left\{ x = (x_n)_{n\in\NN} \in \mbox{Sym}(3)^\NN \mid x_n \in \{\ident, (\begin{smallmatrix}
1 & 2 
\end{smallmatrix})\}\mbox{ for almost all }n\right\}
$$
contains the co-compact lattice
$$
H = \left\{ x \in G \mid x_n \in \{\ident, (\begin{smallmatrix}
1 & 2 & 3 
\end{smallmatrix}),(\begin{smallmatrix}
1 & 3 & 2
\end{smallmatrix})\}\mbox{ for all }n\right\}
$$
but has no compact open normal subgroup. This group~$G$ is not compactly generated however, leaving the following question open. 
%%%%%
\begin{question*} Suppose that~$G$ is a compactly generated \tdlc group with a lattice that is a periodic group. Must~$G$ have a compact open normal subgroup? 
\end{question*}

In the next proposition, \emph{infinite divisibility} of the element~$x$ in~$G$ means that there are increasing sequences, $n_k$, of positive integers and, $x_k$, of  elements of $G$ such that $x = x_k^{n_k}$ for every~$k$. 
\begin{prop}
\label{prop:divisible}
Suppose that~$G$ has a lattice $H$ in which every element is infinitely divisible. Then $G$ is uniscalar but need not be anisotropic.
\end{prop}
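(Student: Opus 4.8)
The proposition splits into two essentially independent claims, and I would treat them separately: that $G$ is uniscalar I would deduce quickly from the results already assembled, while that $G$ need not be anisotropic I would settle by producing one explicit example.

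For the first claim, the key preliminary observation is that the scale is multiplicative along powers. Taking $V$ tidy for $x$ and noting that $x^{n} = \ident\cdot x^{n}\cdot\ident \in V_- x^{n} V_+ = (VxV)^{n}$, Lemma~\ref{lem:scale on double cosets} gives $s_G(x^{n}) = s_G(x)^{n}$ for all $n\geq 1$. By Corollary~\ref{cor:fincov}(1) it then suffices to show that $s_G(h) = 1$ for every $h\in H$. Fixing $h\in H$ and invoking infinite divisibility, I would write $h = x_k^{n_k}$ with $x_k\in G$ and $n_k\to\infty$, so that $s_G(h) = s_G(x_k)^{n_k}$ for every $k$. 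Since each $s_G(x_k)$ is a positive integer while the left-hand side is a single number independent of $k$, the presence of any $x_k$ with $s_G(x_k)\geq 2$ and $n_k$ large would force $s_G(h)\geq 2^{n_k}$, which is impossible; hence $s_G(x_k)=1$ for all large $k$ and $s_G(h)=1$. Corollary~\ref{cor:fincov}(1) now yields that $G$ is uniscalar, and I expect no difficulty here.

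The real content is the second claim, for which I would exhibit a uniscalar group carrying a lattice of infinitely divisible elements but with a nontrivial contraction subgroup. Let $K = \prod_{q\in\QQ}\ZZ/p\ZZ$ with its product topology, a compact totally disconnected group; let $\QQ$, taken discrete, act on $K$ by translating the index; and put $G = K\rtimes\QQ$. Then $K$ is a compact open normal subgroup, so $G$ is uniscalar; the subgroup $\QQ$ is discrete with $G/\QQ\cong K$ compact, hence a co-compact lattice, and each of its elements is already infinitely divisible inside $\QQ$ since $q = n\cdot(q/n)$ for every $n$. Finally, letting $\delta_0\in K$ be the generator supported at $0\in\QQ$ and taking any $r\in\QQ\setminus\{0\}$, iterated conjugation by $r$ translates the support of $\delta_0$ off to infinity, so these conjugates converge to $\ident$ in the product topology; thus $\delta_0\in{\sf con}(r)$ and $G$ is not anisotropic.

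The main obstacle lies wholly in the second claim, in reconciling three pulls at once: that $G$ be uniscalar (which the first claim shows is unavoidable), that its lattice consist of infinitely divisible elements, and yet that some contraction subgroup be nontrivial. The device that resolves the tension is to house the contraction inside a compact open normal subgroup, where iterated conjugates may tend to the identity while never leaving a compact set so that the scale stays equal to $1$, and to supply divisibility through a divisible acting group such as $\QQ$. The remaining checks — that $G$ is totally disconnected and locally compact, that the translation action is by continuous automorphisms, and that $\QQ$ is discrete with $G/\QQ$ compact, whence co-compact and so a lattice — are routine, and I would confirm them briefly without elaboration.
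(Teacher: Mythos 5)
Your proof is correct and takes essentially the same route as the paper: the uniscalarity claim is obtained from $s_G(x^n)=s_G(x)^n$ together with Corollary~\ref{cor:fincov}(1), and your example $\bigl(\prod_{q\in\QQ}\ZZ/p\ZZ\bigr)\rtimes\QQ$ is the paper's $C_2^{\QQ}\rtimes\QQ$ up to replacing $C_2$ by $\ZZ/p\ZZ$. Your added details (deriving the power law from Lemma~\ref{lem:scale on double cosets} and exhibiting $\delta_0\in{\sf con}(r)$ explicitly) are sound.
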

\begin{proof}
Since $s_G(x^n) = s_G(x)^n$ for every~$n\geq0$, infinite divisibility of~$x$ implies that $s_G(x)=1$. Hence~$H$ is uniscalar and it follows by Corollary~\ref{cor:fincov} that $G$ is uniscalar.

The group $G = C_2^{\QQ}\rtimes \QQ$, where $C_2$ is the group of order~$2$ and $\QQ$ acts on $C_2^{\QQ}$ by translation, has the infinitely divisible lattice $\QQ$ but $\overline{{\sf con}(x)} = C_2^{\QQ}$ for every $x\ne0$ in $\QQ$. Hence $G$ is not anisotropic, thus justifying the last claim. 
\end{proof}

The paper concludes with another question about how properties of a group might depend on a lattice. It is shown in~\cite{Baumcocompact} that, if the \tdlc group~$G$ has a uniform lattice isomorphic to the free group of rank~$k$, then the set of prime divisors of $s_G(G)$ is bounded by a number that depends on~$k$. Note, too, that, by~\cite[Theorem~1]{CapReWi}, that if~$G$ is a compactly generated \tdlc group, there is a finite set $\eta = \eta(G)$ of prime numbers such that the open pro-$\eta$ subgroups of~$G$ form a base of identity neighbourhoods. 
%%%%
\begin{question*}
Suppose that~$G$ is a \tdlc group with a lattice having~$k$ generators. Is there a bound on $s_G(G)$ and on the local prime content that depends only on~$k$?
\end{question*}
%%%%%


\begin{thebibliography}{55}

\bibitem{nolattice}
U. Bader, P.-E. Caprace, T. Gelander and S. Mozes,
\newblock Simple groups without lattices, 
\newblock {\em Bull. Lond. Math. Soc.} {\bf 44}, 1-13 (2012).

\bibitem{basslubotzky}
H. Bass and A. Lubotzky (with appendices by H. Bass, L. Carbone, A. Lubotzky, G. Rosenberg and J. Tits),
\newblock {\em Tree Lattices},
\newblock Progress in Mathematics, Birkh\"auser, (2001) Boston. 

\bibitem{Baumcocompact}
U. Baumgartner,
\newblock Scales for co-compact embeddings of virtually free groups, \newblock {\em Geom. Dedicata}, {\bf 130} (2007), 163--175.

\bibitem{geometry(tdlcG)}
U. Baumgartner, J. Ramagge, and G. A. Willis,
\newblock Scale-multiplicative semigroups and geometry: automorphism groups of trees,
\newblock {\em preprint} 2013, {\tt arXiv:1312.1064}.

\bibitem{contraction(aut(tdlcG))}
U. Baumgartner and G. A. Willis,
\newblock Contraction groups and scales of automorphisms of totally disconnected locally compact groups. 
\newblock {\em Israel J. Math.}, {\bf 142}, 221--248 (2004). 

\bibitem{leboudecnolattice}
A.~Le~Boudec,
\newblock Groups acting on trees with almost prescribed actions,
\newblock {\em preprint} (2015), {\tt arXiv:1505.01363v1}.

\bibitem{BurgMozesLattices}
M. Burger and S. Mozes, 
\newblock Lattices in product of trees, \newblock {\em Inst. Hautes Etudes Sci. Publ. Math.},
{\bf 92} (2000), 151--194 (2001).

\bibitem{CapReidWill1}
P.-E. Caprace, C. D. Reid and G. A. Willis,
\newblock Limits of contraction groups and the Tits core,
\newblock {\em J. Lie Theory}, {\bf 24}, 957--967 (2014).

\bibitem{CapReWi}
P.-E. Caprace, C. D. Reid and G. A. Willis,
\newblock 
Locally normal subgroups
of simple locally compact groups, 
\newblock {\em Comptes Rendus}, Math\'ematique {\bf 351}, 657--661  (2013).

\bibitem{Dant} 
van Dantzig, 
\newblock Zur topologischen Algebra,
\newblock {\em Math. Ann.}, {\bf 107}, 587--626 (1933). 

\bibitem{HewRoss} 
E. Hewitt and K. A. Ross, 
\newblock {\em Abstract Harmonic Analysis I},
\newblock Grund. Math. Wiss. Bd 115, Springer-Verlag, Berlin--G\"ottingen--Heidelberg 1963.

\bibitem{l.rank1}
A. Lubotzky.
\newblock Lattices in rank one {L}ie groups over local fields,
\newblock {\em Geometric and Functional Analysis}, 1:405--431, 1991.

\bibitem{Margulis}
G. A. Margulis,
\newblock {\em Discrete subgroups of semisimple Lie groups},
\newblock Ergeb. der Math. 3. Folge Bd 17, Springer-Verlag, Berlin-Heidelberg-New York, 1991.

\bibitem{MontZip} 
D. Montgomery and L. Zippin, 
\newblock {\em Topological Transformation  Groups}, 
\newblock Interscience Publishers, New York--London 1955.

\bibitem{Raghunathan}
M. S. Raghunathan,
\newblock {\em Discrete subgroups of Lie groups},
\newblock Ergeb. der Math. Bd 68, Springer-Verlag, Berlin-Heidelberg-New York, 1972. 

\bibitem{LCGroups}
H. Reiter and J. Stegemann,
\newblock {\em Harmonic analysis on locally compact groups} (2nd ed.), 
\newblock Lond. Math. Soc Monographs New Series 22, Clarendon Press, Oxford, 2000.

\bibitem{Shalom_rigidity}
Y. Shalom,
\newblock { Rigidity of commensurators and irreducible lattices},
\newblock {\em Inventiones Math.}, {\bf 141}, 1-54 (2000).
  
\bibitem{Willis94}
G. A. Willis, 
\newblock The structure of totally disconnected, locally compact groups,
\newblock {\em Math. Ann.} {\bf 300}, no. 2, 341--363 (1994).

\bibitem{Willis01}
G. A. Willis, 
\newblock Further properties of the scale function on a totally disconnected group,
\newblock {\em J.~Algebra}, {\bf 237}, no. 1, 142--164 (2001).

\end{thebibliography}
\end{document}